\DeclareMathOperator{\arccosh}{arccosh}
\theoremstyle{plain}
\newtheorem{theorem}{Theorem}
\newtheorem{lemma}[theorem]{Lemma}
\newtheorem{corollary}[theorem]{Corollary}
\theoremstyle{definition}
\theoremstyle{remark}
\newtheorem{remark}[theorem]{Remark}
\title{H\"older Continuity of the Integrated Density of States in the One-Dimensional Anderson Model}
\author{Eric Hart\and B\'alint Vir\'ag}
\date{\today}
\begin{document}

\maketitle

\begin{abstract}
\noindent 
We consider the one-dimensional random Schr\"odinger operator 
\begin{equation*}
 H_{\omega} = H_0 + \sigma V_{\omega},
\end{equation*}
where the potential  $V$ has i.i.d. entries with bounded support. We prove that the IDS is H\"older continuous with exponent $1-c \sigma$.
This improves upon the work of Bourgain showing that the H\"older exponent tends to $1$ as sigma tends to $0$ in the more specific Anderson-Bernoulli setting.

\end{abstract}

\bigskip\bigskip\bigskip

\section{Introduction}
\subsection{The Anderson Model}
We consider the Anderson model for Random Schr\"odinger operators

\begin{equation}
H_\omega = H_0 + \sigma V_\omega
\end{equation}
where $H_0$ is the discrete Laplacian operator on $\ell^2\left(\mathbb{Z^d}\right)$, $V_\omega$ is a random potential (diagonal) operator, with iid random variables on the diagonal, and $\sigma$ is the coupling constant, a parameter regulating the amount of randomness in the model, so that taking $\sigma$ to be very small decreases the randomness. We will be working with the $1$-dimensional model (i.e. the model on $\ell^2\left(\mathbb{Z}\right)$), which can be expressed in matrix form as

\begin{equation*}
 H_\omega = \begin{bmatrix}
  \ddots \\ & 0 & 1 & 0 & 0 \\ & 1 & 0 & 1 & 0 \\ & 0 & 1 & 0 & 1 \\ & 0 & 0 & 1 & 0 \\ & & & & & \ddots
 \end{bmatrix}
 + \sigma
 \begin{bmatrix}
  \ddots \\ & v_{-1} & 0 & 0 & 0 \\ & 0 & v_0 & 0 & 0 \\ & 0 & 0 & v_1 & 0 \\ & 0 & 0 & 0 & v_2 \\ & & & & & \ddots
 \end{bmatrix}
\end{equation*}
where the $v_i$, referred to as single-site potentials, are iid random variables with common distribution $\mathbb{P}$.

\subsection{The Result}
Let $\mu_\sigma$ be the integrated density of states measure (IDS) for $H_\omega$. We have the following theorem:

\begin{theorem}
\label{mainTheorem}
  Consider the Anderson model under the conditions that $\mathbb{P}$ has mean $0$, variance $1$ and support bounded by $c_0$. For all $\gamma > 0$ the IDS, $\mu_\sigma$, restricted to the interval $(-2+\gamma,-\gamma)\cup(\gamma,2-\gamma)$, is H\"older continuous with exponent $1-460c_0^3\sigma/\gamma$. More precisely, for $\lambda_0 \in (-2+\gamma,-\gamma)\cup(\gamma,2-\gamma)$, $\sigma \leq 1$ and $\lambda \leq 1$

  \begin{equation*}
  \mu_\sigma [\lambda_0, \lambda_0+\lambda]  \leq \frac{2}{\sigma^3} \lambda^{1 - 460c_0^3\sigma / \gamma}.
  \end{equation*}
\end{theorem}

\subsection{Why the Anderson Model}
The Anderson model is used to consider a quantum mechanical particle moving through a disordered solid, feeling potential from atoms at the lattice sites, where the randomness of the potential corresponds to impurities in the solid; see, for example, the discussion in \cite{Inv}. The particle moving in $d$-dimensional space is given by a function $\psi$, and it's evolution by $e^{-itH_{\omega}} \psi_0$. With this view, the operator prescribes the time evolution of the particle, and properties of the spectrum of $H_\omega$, $\Sigma\left(H_{\omega}\right)$, correspond to questions about how electrons move through the wire. A natural question to ask is whether the generalized eigenfunctions are localized or delocalized, which can be thought of as a question about the conductive properties of the solid. When $\sigma = 0$ we imagine a metal with no impurities, which we expect to be a conductor. Indeed, the operator $H_0$ has spectrum $\left(-2,2\right)$, and its generalized eigenfunctions are not in $\ell^2$. On the other hand, in $1$-dimension, for any $\sigma >0$ one can show that the eigenfunctions become exponentially localized, a phenomenon known as Anderson localization. See for example the results of \cite{GMP}, \cite{KS}, and \cite{CKM}, the latter covering the case of Bernoulli-potentials.

\subsection{The Integrated Density of States}

The integrated density of states (IDS) can be thought of as the average number of eigenfunctions per unit volume in the spectrum. It can be obtained by restricting the operator to a finite box, and then taking the limit of the empirical eigenvalue distribution, see \cite{Inv}. Understanding the IDS is a first step in the study of the spectral properties of the random operator. When $\mathbb{P}$ is absolutely continuous, much is understood about the IDS. The main tool mathematicians use in this case is the celebrated estimate of \cite{Wegner}.  It bounds the expected number of eigenvalues in a small interval of the spectrum of a Schr\"odinger operator restricted to a finite box. This bound depends on the infinity norm of the density, and so only exists in the case where the distribution of the noise is absolutely continuous. The lack of this tool in cases where the noise is not absolutely continuous results in a bigger challenge to prove many expected results; even in the simple case where the noise has a Bernoulli distribution, referred to as the Anderson-Bernoulli model, much less is known.

It is natural to ask further questions about the IDS, such as what kind of continuity properties it has, and whether we can describe it more explicitly. One would expect that the IDS should be H\"older continuous for small coupling constants, and that the exponent should improve, specifically approach $1$ as $\sigma \downarrow 0$, see \cite{B1}.  This and more has been known when the noise is absolutely continuous for some time.  For example, Minami estimates -- bounds on the probability of seeing two eigenvalues in a small interval of the spectrum of a Schr\"odinger operator -- are even more refined than the Wegner estimate, can be proved in the continuous case, and are used in \cite{minami1996} to establish Poisson statistics of the spectrum.  On the other hand, when the noise is not absolutely continuous, it is possible for H\"older continuity to fail if $\sigma$ is not small enough. For example, \cite{ST} formalize a result of \cite{Halp} to show that, when the noise is Bernoulli, for any $\sigma > 0$, the IDS cannot be H\"older continuous with exponent greater than

\begin{equation*}
2 \log 2 / \arccosh \left(1+\sigma\right).
\end{equation*}
Since the maximum exponent of H\"older continuity is $1$ anyway, this result has no content for small sigma. On the other hand, for any $\sigma > 9/8 = \cosh\left( 2\log2 \right)-1$, the exponent of H\"older continuity must be bounded away from $1$.

\subsection{H\"older Continuity}
In \cite{SVW} H\"older continuity is established in the Anderson-Bernoulli model for certain coupling constants, but the exponent in that paper gets worse instead of better as $\sigma$ decreases.  \cite{B1} establishs that the H\"older continuity doesn't break down as $\sigma$ decreases, and the exponent must tend to at least $1/5$.  This result is improved in \cite{B2}, where he gives a non-quantitative bound to show that the Holder exponent converges to $1$ as $\sigma \downarrow 0$. Following his argument carefully it seems that his methods yield a boud of the form
\begin{equation*}
1-c|\log\left(\sigma\right)|^{-1/2}.
\end{equation*}
In contrast, our result gives that the speed with which the exponent tends to $1$ is bounded by

\begin{equation*}
 1-c\sigma
\end{equation*}
where our value of $c$ is explicit. In both our result and Bourgain's the constant depends on the energy being considered, in particular it gets large at energies near the edge of the spectrum, but also near $0$. However, our method applies to a wider class of noise distributions than Bernoulli, specifically our main assumption is that $\mathbb{P}$ has finite support. Our assumptions that $\mathbb{P}$ has mean $0$ and variance $1$ are for ease of notation.

The breakdown of this work is as follows. In Section \ref{sect2} we use the method of Transfer matrices to view the eigenvalue equation for the finite-level Schr\"odinger operator as a product of $2 \times 2$ matrices, and get some geometric intuition by viewing this matrix product as a random walk in the (upper half) complex plane via projectivization. In Section \ref{sect3} we prove a deterministic result (Theorem \ref{tmbktkblp}) relating the number of eigenvalues in a small interval of the finite-level Schr\"odinger operator to the number of large backtracks of the imaginary part of a random walk (with drift) defined in Section \ref{sect2}. We also bound the jumps of the real part of this random walk. In Section \ref{sect4} we use the known Figotin-Pastur recursion, most clearly laid out in \cite{BS2}, and a Martingale argument to bound the probability of large backtracks of random walks like the one in Section \ref{sect2} (Theorem \ref{tmlowprob}). Finally, in Section \ref{sect5} we carefully choose some parameters and apply the results of Sections \ref{sect2} and \ref{sect3} to bound the probability of the number of eigenvalues in a small interval of the finite-level Schr\"odinger operator, and take a limit to obtain the main result.

\section{Preliminaries}
\label{sect2}

\subsection{The Transfer Matrix Approach}

Consider the $1$-dimensional random Schr\"odinger operator in the Anderson model $H_\omega = H_0 + \sigma V_\omega$.  We will be working with the restriction of this operator to a finite box, $H_{\omega, n}$. Since $H_\omega$ is tri-diagonal, the eigenvalue equation

\begin{equation*}
 H_{\omega,n} \phi = \lambda \phi
\end{equation*}
can be solved recursively in order to determine if a given $\lambda$ is an eigenvalue. Doing so allows us to write down an equivalent formulation of the eigenvalue equation:

\begin{equation}
\label{evOrig}
\begin{bmatrix} \phi_{n+1} \\ \phi_n \end{bmatrix} =  T^{\left(\lambda\right)}_{n} T^{\left(\lambda\right)}_{n-1} \cdots T^{\left(\lambda\right)}_{1} \begin{bmatrix} \phi_{1} \\ \phi_0 \end{bmatrix}
\end{equation}
where we set $\phi_{n+1} = \phi_0 = 0$ and the $T$ matrices are given by

\begin{equation*}
T^{\left(\lambda\right)}_i = \begin{bmatrix} \lambda - \sigma\omega_i & -1 \\ 1 & 0 \end{bmatrix}.
\end{equation*}

Note that $\phi_n$ in this equation is unknown, and that by linearity we may let $\phi_1 =1$, which is allowed because $\phi_1$ can't be $0$, since if it were, the recursion would imply that $\phi \equiv 0$. This rewriting of the eigenvalue equation is a common technique when studying the spectrum of Schr\"odinger operators in the Anderson model, often called the transfer matrix approach. One immediate benefit of this approach is that we can use the transfer matrices to define the Lyapunov exponent, $\gamma_{\sigma}\left(\lambda\right)$, a quantity which captures the speed at which the product of these transfer matrices grows, as follows

\begin{equation*}
 \gamma_{\sigma} \left(\lambda\right) = \lim_{n \to \infty} \frac{1}{n} \log ||T_i^{\left(\lambda\right)}||.
\end{equation*}
The Lyapunov exponents of Schr\"odinger operators can give us information about the operators themselves. For example, the authors in \cite{CL} give a theorem excluding H\"older continuity of the IDS for operators with large Lyapunov exponents.

\subsection{The Complex Plane}
To help with intuition, we will identify the objects we're working with in the upper half of the complex plane (UHP). Specifically, we can view the transfer matrices $T_i^{\left(\lambda\right)}$ as automorphisms of the UHP through projectivization. Given some (complex) $2$-vector

\begin{equation*}
 v = \begin{bmatrix} v_1 \\ v_2 \end{bmatrix}
\end{equation*}
we think of its projectivization as the point 

\begin{equation*}
\mathscr{P} \left[v\right] = \frac{v_1}{v_2}
\end{equation*}
in the complex plane. Then a $2 \times 2$ matrix 

\begin{equation*}
M =  \begin{bmatrix} a & b \\ c & d \end{bmatrix}
\end{equation*}
can be thought of as an automorphism of the plane as

\begin{equation*}
 M \circ v = \mathscr{P} \left[M \begin{bmatrix} v_1 \\ v_2 \end{bmatrix}\right] = \frac{a \mathscr{P}\left[v\right] + b}{c \mathscr{P}\left[v\right] + d}.
\end{equation*}
While the UHP will be the most useful model for us to think about our objects geometrically, occasionally things will be easier to understand in the context of the disk. For example, a certain automorphism of the half plane may be most easily understood as a ``rotation'' if it corresponds to mapping the UHP to the disk with a Cayley transform, applying a rotation to the disk, and then mapping the result back to the UHP. In such cases, we may call such an automorphism a rotation for simplicity.

\subsection{More on Transfer Matrices} \label{MoTM}
We will be investigating the spectrum by fixing a particular point, or energy in the spectrum, $\lambda_0$, and looking at the spectrum near this energy. For a fixed $\lambda_0$, define $\theta$, $\rho$, and $z$ by

\begin{equation*}
 \lambda_0 =: 2\cos \theta, \hspace{5pt} 0 \leq \theta \leq \pi
\end{equation*}

\begin{equation*}
 \rho := \frac{1}{\sqrt{4-\lambda_0^2}} = \frac{1}{2 \sin \theta}
\end{equation*}
and

\begin{equation*}
 z := \left(\lambda_0 + i/\rho\right)/2 = e^{i\theta}. 
\end{equation*}
To simplify notation we suppress the $\lambda_0$ when it appears in the transfer matrices, writing

\begin{equation*}
T^{\left(\lambda_0\right)}_i = T_i = \begin{bmatrix} \lambda_0 - \sigma\omega_i & -1 \\ 1 & 0 \end{bmatrix}.
\end{equation*}
Finding eigenvalues near $\lambda_0$ means solving equation (\ref{evOrig}) for $\lambda_0 + \lambda$.
If we define 

\begin{equation*}
 Q = \begin{bmatrix} 1 & 0 \\ -\lambda & 1 \end{bmatrix}
\end{equation*}
then $T_i^{\left(\lambda_0+\lambda\right)} = T_i Q$, and we can substitute this into equation (\ref{evOrig}), evaluated at $\lambda_0 + \lambda$, to get

\begin{equation*}
\begin{bmatrix} \phi_{n+1} \\ \phi_n \end{bmatrix} =  T_{n} Q T_{n-1} Q \cdots T_{1}Q \begin{bmatrix} \phi_{1} \\ \phi_0 \end{bmatrix} 
\end{equation*}
which we can rearrange to obtain

\begin{equation}
\label{longEqn}
 \left(T_1\right)^{-1} \left(T_2\right)^{-1} \cdots \left(T_n\right)^{-1} \begin{bmatrix} 0 \\ \phi_n \end{bmatrix}  = Q^{T_{n-1} T_{n-2} \cdots T_{1}} Q^{T_{n-2} T_{n-3} \cdots T_{1}} \cdots Q \begin{bmatrix} \phi_{1} \\ 0 \end{bmatrix}
\end{equation}
with the notation $Q^A$ being conjugation of $Q$ by $A$. This expression is convenient because all of the randomness on the right hand side is in the conjugation, but $\lambda$ only appears in $Q$, which has no randomness. This allows us to easily view the process as a random walk.
To simplify notation, let $W_{i} = T_{i} T_{i-1} \cdots T_{1}$, call the expression on the left hand side of (\ref{longEqn}) $v_*$, i.e.

\begin{equation*}
 v_* = W_n^{-1} \begin{bmatrix} 0 \\ \phi_n \end{bmatrix}
\end{equation*}
and let $V_n$ be the expression on the right hand side of equation (\ref{longEqn}) so that (by reversing the sides of the equation) we may rewrite (\ref{longEqn}) as

\begin{equation}
\label{defSuperman}
V_n := \begin{bmatrix} v_{1,n} \\ v_{2,n} \end{bmatrix} = Q^{W_{n-1}} Q^{W_{n-2}} \cdots Q^{W_1} Q \begin{bmatrix} \phi_{1} \\ 0 \end{bmatrix}  = v_*.
\end{equation}
The sequence $\{ W_k^{-1} \circ z \}_{k=1}^n$ defines a process in the UHP, and the sequence $\{\mathscr{P}\left[ V_k \right] \}_{k=1}^n$ defines a process on the boundary of the UHP plane.  Each $V_k$ is obtained by applying the automorphism $Q^{W_{k-1}}$ to the previous point, starting at the point at infinity, given by the projectivization of

\begin{equation*}
p = \begin{bmatrix} \phi_{1} \\ 0 \end{bmatrix}. 
\end{equation*}
Let $s_k$ be the projectivization of $V_k$, in other words

\begin{equation*}
s_k = \mathscr{P} \left[V_k\right] = v_{1,k}/v_{2,k}
\end{equation*}
and, keeping in mind that the process 

\begin{equation*}
 W_n^{-1} \begin{bmatrix} z \\ 1 \end{bmatrix}
\end{equation*}
corresponds to the process $W_n^{-1} \circ z$ in the UHP model, we will split this process up into its real and imaginary parts so that 

\begin{equation*}
X_n + iY_n := W_n^{-1} \circ z . 
\end{equation*}
With the understanding of the process $W_n^{-1} \circ z$ as a process in the UHP, and its separation into real and imaginary parts, we are able to state our main theorems.

\subsection{Main Theorems}

If $Y$ is a real valued process, then whenever $Y$ increases by $B$, we call this a \textit{backtrack} of $Y$ by an amount $B$. Note that this terminology makes more sense for processes with drift down. In particular it makes sense for the imaginary parts of random walks in the UHP which converge to the boundary.

\begin{theorem}
\label{tmbktkblp}
Let $\lambda_0 \in \left(-2,0\right) \cup \left(0,2\right)$, $n \in \mathbb{N}$, $\lambda>0$ and $\epsilon >0$. Fix $M$, let $0 < \beta \leq \left(2M\right)^{-1}$, and assume that $|\Delta X_k|/Y_k = |X_k - X_{k-1}|/Y_k \leq M$ for all $k \leq n$.  Then the number of eigenvalues of $H_{\omega,n}$ in the interval $\left[\lambda_0, \lambda_0 + \lambda\right]$ can be no more than $1$ plus the number of backtracks of the process $\log Y_n + \left[ \left(\epsilon + \lambda\beta\right)/\sin\theta + 2M\beta \right] n$ that are at least as large as $\log \left(\epsilon\beta/\lambda\right)$.
\end{theorem}

%

\begin{theorem}
\label{tmlowprob}
Assume $\sin 2\theta\not=0$. Let $E\left(\omega_j\right) = 0$, $E\left(\omega_j^2\right) = 1$, $|\omega_j| < c_0 $, and $\sigma \leq \frac{2\sin\theta |\sin 2\theta|}{460c_0^3}$. Also assume $\kappa \leq 6c_0^3 \rho^3 \sigma^3 / |\sin2\theta|$.
Then the probability that the process $\log Y_n + \kappa n$ has a backtrack of size $B$ starting from time $1$ is at most 

\begin{equation*}
2e^{-B (1 - 230c_0^3 \sigma / 2\sin\theta |\sin 2 \theta|)}.
\end{equation*}
\end{theorem}

\section{Random Schr\"odinger Operator and Random Walks}
\label{sect3}

\subsection{Walk on the Boundary of the UHP}
The process $V_k$ can be viewed as a random walk on the boundary of the UHP via projectivization. Since 

\begin{equation*}
Q \circ v = \frac{v}{1-\lambda v} 
\end{equation*}
there is reason to think of the matrix $Q$ as moving points $v$ on the boundary of the UHP ``to the right''. Since $\lambda$ is small, it certainly does this when $v$ is not too large. If $v$ is very large, it is possible that $Q \circ v < v$, but in this case we will think of $Q$ as having moved $v$ ``to the right, past $\infty$''. In this sense, conjugates of $Q$ also move points ``to the right'' along the boundary of the UHP.


With this in mind, we view the process $V_n$ as a random walk on the boundary of the UHP moving only to the right, so the notion of ``how many times this process passes a fixed point'' makes sense. On the other hand, since (\ref{defSuperman}) is just a rearrangement of the eigenvalue equation for the Schr\"odinger operator $H_{\omega,n}$, we make the following observation: for a fixed $n$ and $\lambda$ if

\begin{equation*}
 Q^{W_{n-1}} Q^{W_{n-2}} \dots Q \begin{bmatrix} \phi_{1} \\ 0 \end{bmatrix}  = v_*
\end{equation*}
then $\lambda_0 + \lambda$ is an eigenvalue of $H_{\omega,n}$. This motivates the following well known fact:

\begin{lemma}
\label{keyObv}
The number of eigenvalues of $H_{\omega,n}$ in the interval $\left[\lambda_0, \lambda_0 + \lambda\right]$ is equal to the number of times that the process $Q_{\lambda}^{W_{k-1}} Q_{\lambda}^{W_{k-2}} \dots Q_{\lambda} \left(p\right)$ passes the point $v_*$ as $k$ goes from $1$ to $n$.
\end{lemma}
Note: the idea here is that for a fixed $n$ we plan to count the eigenvalues of $H_{\omega,n}$ by considering each $Q^{W_k}$ as one step in a process, and looking at the behaviour of that process as $k$ goes from $1$ to $n$.

\begin{proof}
This proof from \cite{KMaV}. Let $B = \left[\lambda_0, \lambda_0 + \lambda\right] \times \left[0,n\right]$. By interpolating linearly to continuous time, we may consider the continuous map $f: B \to S^1$ given by

\begin{equation*}
 f\left(\lambda, t\right) =Q_{\lambda \left(t-1 - \lfloor t-1 \rfloor\right)}^{W_{\lceil t -1 \rceil }} Q_{\lambda}^{W_{\lfloor t-1 \rfloor}} Q_{\lambda}^{W_{\lfloor t-2 \rfloor}} \dots Q_{\lambda} \left(p\right).
\end{equation*}
Consider the loop given by going around the perimeter of $B$, i.e. from $\left(\lambda_0,0\right)$ to $\left(\lambda_0+\lambda,0\right)$ to $\left(\lambda_0+\lambda,n\right)$ to $\left(\lambda_0,n\right)$ and back to $\left(\lambda_0,0\right)$. Since $B$ is simply connected, the image of $f$ is topologically trivial. Further, $f\left(\left[\lambda_0, \lambda_0 + \lambda\right] \times \{0\}\right) = f\left(\{ \lambda_0\} \times \left[0,n\right]\right) = p$. Therefore, $f\left(\{ \lambda \} \times \left[0,n\right]\right)$ and $f\left(\left[\lambda_0,\lambda_0+\lambda\right] \times \{ n \}\right)$ must have opposite winding numbers. In other words, the number of times that the process 

\begin{equation*}
\{ V_k \}_{k=1}^n 
\end{equation*}
passes the point $v_*$ is equal to the number of times that the process 

\begin{equation*}
Q_{\lambda^*}^{W_{n-1}} Q_{\lambda^*}^{W_{n-2}} \dots Q_{\lambda^*} \left(p\right) 
\end{equation*}
passes the point $v_*$ as $\lambda^*$ is varied from $0$ to $\lambda$. By the observation above, the latter is clearly the number of eigenvalues in $\left[ \lambda_0, \lambda_0+\lambda \right]$.
\end{proof}

\subsection{Bounding By Rotations}
Define

\begin{equation}
\label{ODErotConj}
 V'_ t = R^{W_t}V_t
\end{equation}
where $R$ is given by 

\begin{equation*}
R = \frac{\lambda} {\sin^2\theta}\begin{bmatrix} -\cos \theta & 1 \\ -1 & \cos \theta \end{bmatrix}, 
\end{equation*}
and $W_t$ is the piecewise constant interpolation of $W_n$, that is $W_t = W_{\lfloor t \rfloor}$. Note that $R$ is chosen so that if we map the UHP to the disk using the version of the Cayley transform sending $z$ to the center of the disk, then $R$ is a rotation about $z$ with speed $\lambda$. For this reason we may think of $R$ as a ``rotation'' even in the UHP. In Theorem \ref{RotThm} we find a relationship between $V_k$ and $V_t$, and in what follows we will use this relationship to understand $V_k$ through $V_t$. This is useful because rotations are relatively simple to deal with. This view of $R$ as a ``rotation'' is also useful in explaining our view of what happens in the projectivization of the $V_t$ process as the point moves past infinity.

\begin{theorem}
\label{RotThm}
 The process $V_k$ is upper-bounded by the process $V_t$ given by differential equation (\ref{ODErotConj}), in the sense that the projectivizations of $V_k$ and $V_t$ are each processes following the point at infinity as it moves along the boundary of the UHP to the right, and for any time $t=k$, the point in the $V_t$ process has moved at least as much as the point in the $V_k$ process has.
\end{theorem}

Consider first a simple version of the $V_k$ process where the $Q$ matrices are unconjugated.  Call this process $\tilde{V}_k$, so

\begin{equation*}
\tilde{V}_k = Q^k \begin{bmatrix} \phi_{1} \\ 0 \end{bmatrix}.
\end{equation*}
Then the $\tilde{V}_k$ process can be described by the finite difference equation

\begin{equation}
\label{FDE}
\tilde{V}_{k + 1} = Q\tilde{V}_{k}
\end{equation}
where we set

\begin{equation*}
\tilde{V}_0 = \begin{bmatrix} v_{1,0} \\ v_{2,0} \end{bmatrix} = \begin{bmatrix} \phi_1 \\ 0 \end{bmatrix}.
\end{equation*}

\begin{lemma}
 Solutions to the finite difference equation (\ref{FDE}) are equal to solutions to differential equation \eqref{ODE} at integer times.
\end{lemma}

\begin{equation}
 \label{ODE}
 \tilde{V}'_t = \begin{bmatrix} 0 & 0 \\ -\lambda & 0 \end{bmatrix} \tilde{V}_t =: \Lambda \tilde{V}_t.
\end{equation}

\begin{proof}
 The difference equation (\ref{FDE}) can be decoupled by considering the rows separately.  The first row gives $\tilde{v}_{1, k+1} = \tilde{v}_{1, k}$.  This means that $\Delta \tilde{v}_1 = 0$ (where we have dropped the $k$ from this coordinate because the solution tells us that it's autonomous).  The second row gives $\tilde{v}_{2, k+1} = -\lambda \tilde{v}_{1, k} + \tilde{v}_{2, k}$.  This means that $\Delta \tilde{v}_2 = -\lambda \tilde{v}_1$, (where again we drop the $k$ because our solution from the first row means that this row is also autonomous). On the other hand, the differential equation (\ref{ODE}) is already decoupled, and encodes precisely the same information: $\tilde{v}_1'=0$, $\tilde{v}_2'=-\lambda \tilde{v}_1$.
\end{proof}
We now consider the differential equation (\ref{ODE}) instead of the difference equation (\ref{FDE}).  We would like to work with the projectivization, specifically the process $\tilde{s} = \tilde{v}_{t,1}/\tilde{v}_{t,2}$.  Using the quotient rule, we obtain the differential equation governing $\tilde{s}$, which is:

\begin{equation}
\label{sODE}
\tilde{s}'=\lambda \tilde{s}^2. 
\end{equation}
Note that $\bar{s}$ gives (through its solutions at integer times) the projectivization of the $\tilde{V}_k$ process. Ultimately we would like to bound the $V_k$ process by the process given in \eqref{ODErotConj}. To that end, we will consider what happens when we replace the matrix $\Lambda$ in \eqref{ODE} by $R$. If we replace $\Lambda$ by $R$ in equation \eqref{ODE}, then with our understanding of $R$ as a rotation, we can use monotonicity to relate the solutions of the two differential equations.

\begin{lemma}
\label{useRot}
 The solution to differential equation (\ref{sODE}) is upper bounded by the solution to the differential equation (\ref{rotODE}), below, which comes from the projectivization of the differential equation obtained by replacing $\Lambda$ with $R$ in the $\tilde{V}_t$ process:
\end{lemma}

\begin{equation}
\label{rotODE}
\tilde{s}'=\frac{\lambda}{\sin^2\theta} \left(\tilde{s}^2 - 2\tilde{s}\cos \theta +1\right) .
\end{equation}

\begin{proof}
 The derivative $\tilde{s}'$ is strictly positive in both differential equations, which means in both cases, the solution $\tilde{s}$ is strictly increasing, so it suffices to show that $\tilde{s}'$ is always bigger in (\ref{rotODE}) than in (\ref{sODE}), or that the ratio
 
 \begin{equation*}
  \frac{\frac{\lambda}{\sin^2\theta} \left(\tilde{s}^2 - 2\tilde{s}\cos\theta +1\right)}{\lambda \tilde{s}^2}
 \end{equation*}
 is always at least 1.  But we can use calculus to find that this ratio is minimized by $\tilde{s}=1/\cos \theta$, and has a minimum value of precisely $1$.
 \end{proof}
 
 At this point we have shown that the simple version of the $V_k$ process ($\tilde{V}_k$, where the $Q$ matrices are unconjugated) has its projectivization upper bounded by the solution to the differential equation given above in \eqref{rotODE}. We will now show that this holds even in the case where the $Q$ matrices are conjugated.
 
 Let $s$ be the projectivization of the process defined by 
 \begin{equation*}
  \tilde{V}_t ' = \Lambda^{W_t} \tilde{V}_t.
 \end{equation*}
  In other words, by using $s$ we are now reintroducing the conjugations.

\begin{corollary}
\label{CorBnd}
 The solution to the differential equation governing $s$ is upper bounded by the solution to the differential equation governing the process corresponding to $s$ but with $\Lambda$ replaced by the rotation matrix $R$. In other words, the result of Lemma \ref{useRot} holds true even in the case where the $Q$ matrices are conjugated.
\end{corollary}

\begin{proof}

 Conjugation of $Q$ by a $k$-independent matrix $W$ is equivalent to replacing the $\tilde{V}$ in the finite difference equation (\ref{FDE}) by $WV$. This new finite difference equation encodes the same information as differential equation (\ref{ODE}) applied to $WV$
 
 \begin{equation*}
  W V'_t = \begin{bmatrix} 0 & 0 \\ -\lambda & 0 \end{bmatrix} W V_t.
 \end{equation*}
 In the projectivization, this means that conjugation of the Q matrices corresponds to applying the transformation $W$ to $\tilde{s}$ in differential equations (\ref{sODE}) and (\ref{rotODE}).  Since $W$ is a fractional linear transformation, it respects order, so the results of Lemma \ref{useRot} still apply. Since $W_t$ is a piecewise constant function, by continuity of the solutions, the bound holds even when conjugating by $W_t$.
 
\end{proof}

We may now prove Theorem \ref{RotThm}:

\begin{proof}
 Equation (\ref{sODE}) with $W_k$ applied to $\tilde{s}$ is the equation governing the projectivization of the process $V_k$, and equation (\ref{rotODE}) with $W_t$ applied to $\tilde{s}$ is the equation governing the projectivization of the process $V_t$. By Corollary \ref{CorBnd} the projectivization of $V_t$ bounds the projectivization of $V_k$.
\end{proof}

Theorem \ref{RotThm} allows us to consider $V_t$ instead of $V_k$ with the effect that the point on the boundary that we are following will always have moved to the right more than it would have without the replacement.  This is useful since $R$, and therefore $R^W$ are rotations, so $R^W$ has a fixed point, $W^{-1}\circ z$.  To figure out where the point $p$ gets moved by the process $V_t$, we need only follow the sequence of centers of rotations: $W_k^{-1} \circ i$.

\subsection{Movement From a Different Perspective}

We will now look at the process $s_t = \mathscr{P}[V_t]$ from the perspective of the process $W_t \circ z$.  From this perspective, $s_t$ will have discrete jumps at integer times.  Write $W_t^{-1} \circ z = X_t + iY_t$ where $X_t$ and $Y_t$ are real and coupled in the following way:  $dY_t = Y_tdZ$ and $dX_t = Y_tdU$ for some processes $U$ and $Z$.  Note that $U$ and $Z$ are pure jump processes.

\begin{lemma}
$V_t$ satisfies the differential equation

\begin{equation*}
V'_ t = \frac{\lambda}{\sin^2 \theta} \begin{bmatrix} - \cos \theta & 1 \\ -1 & \cos \theta \end{bmatrix} ^{\bar{W}_t} V_t  = \hspace{2pt} \frac{\lambda}{\sin \theta} \begin{bmatrix} 0 & 1 \\ -1 & 0 \end{bmatrix} ^{A \bar{W}_t} V_t 
\end{equation*}
\end{lemma}

where 

\begin{equation*}
A = \begin{bmatrix} 1 & -\cos \theta \\ 0 & \sin \theta \end{bmatrix}
\end{equation*}

and

\begin{equation*}
\bar{W}_t = \begin{bmatrix} 1 & -X_t + Y_t \cot \theta \\ 0 & Y_t/\sin \theta \end{bmatrix}.
\end{equation*}

\begin{proof}
The first equality is nearly a restatement of the definition of $V_t$ from equation (\ref{ODErotConj}), but with $\bar{W_t}$ in place of $W_t$, so to prove the first equality it is sufficient to check that $R^{W_t} = R^{\bar{W}_t}$. The eigenvectors of $R$ are 

\begin{equation*}
  \begin{bmatrix} z \\ 1 \end{bmatrix} \text{ and } \begin{bmatrix} \bar{z} \\ 1 \end{bmatrix}.
\end{equation*}
But $W^{-1}_t \circ z = X_t + iY_t$, and we can compute $\bar{W}_t \circ X_t + iY_t = z$, so 

\begin{equation*}
\bar{W}_t^{-1} \begin{bmatrix} z \\ 1 \end{bmatrix} = cW^{-1}_t \begin{bmatrix} z \\ 1 \end{bmatrix}
\end{equation*}
which means that the eigenvectors of $W_t \bar{W}_t^{-1}$ are also 

\begin{equation*}
  \begin{bmatrix} z \\ 1 \end{bmatrix} \text{ and } \begin{bmatrix} \bar{z} \\ 1 \end{bmatrix}
\end{equation*}
so $R$ and $W_t \bar{W}_t^{-1}$ commute. Therefore $R^{W_t \bar{W}_t^{-1}} = R$, and $R^{W_t} = R^{\bar{W}_t}$.
The second equality is true because 

\begin{equation*}
 \begin{bmatrix} -\cos \theta & 1 \\ -1 & \cos \theta \end{bmatrix} = \begin{bmatrix} 0 & 1 \\ -1 & 0 \end{bmatrix}^A. 
\end{equation*}

\end{proof}
Now let $F_t$ be $V_t$ seen from the perspective of the $X_t + iY_t$, so we have

\begin{equation*}
 F_t := A \bar{W}_t V_t = \begin{bmatrix} v_{1,t}-X_t v_{2,t} \\ Y_t v_{2,t} \end{bmatrix}
\end{equation*}
and we can compute $dF_t$ as follows:

\begin{align*}
dF_t &= Y_t\begin{bmatrix} -dU \\ dZ \end{bmatrix} v_{2,t} + \begin{bmatrix} v'_{1,t} - X_t v'_{2,t} \\ Y_t v'_{2,t} \end{bmatrix} \\
&= Y_t\begin{bmatrix} -dU \\ dZ \end{bmatrix} v_{2,t} + \begin{bmatrix} 1 & -X_t \\ 0 & Y_t \end{bmatrix} V'_t dt\\
&= Y_t\begin{bmatrix} -dU \\ dZ \end{bmatrix} v_{2,t} + A\bar{W}V'_t dt\\
&= Y_t\begin{bmatrix} -dU \\ dZ \end{bmatrix} v_{2,t} + \frac{\lambda}{\sin \theta} \begin{bmatrix} 0 & 1 \\ -1 & 0 \end{bmatrix} F_t dt.
\end{align*}
Once again the differential equation is autonomous, so can be written compactly as:

\begin{equation}
\label{FMDE}
dF = F_2 \begin{bmatrix} -dU \\ dZ \end{bmatrix} + \frac{\lambda}{\sin \theta} \begin{bmatrix} 0 & 1 \\ -1 & 0 \end{bmatrix} F dt
\end{equation}
and taking projectivizations, we define 

\begin{equation*}
\bar{s}_t :=  \frac{F_1}{F_2}.
\end{equation*}

\begin{remark}
 The process $\bar{s}$ starts at $p$ and moves along the boundary of the UHP, however it is not well defined because of the discrete jumps at integer times. To ensure that $\bar{s}$ is well defined, we will always use the right-continuous version of the process.
\end{remark}

\begin{lemma}
\label{bktkblup}
Fix $\lambda$, $M$, $\epsilon$, and $\beta \leq \left(2M\right)^{-1}$. Let $X_t$ and $Y_t$ be real processes coupled by $dY_t = Y_tdZ$ and $dX_t = Y_tdU$, where $U$ and $Z$ are pure jump processes. If $|\Delta X_t|/Y_t \leq M$ (for all $t$), and the process $\log Y_n + \left[\left(\epsilon + \lambda\beta\right)/\sin\theta + 2M\beta\right]n$ has no backtracks as large as $\log \left( \epsilon\beta/\lambda \right)$, then the process $\bar{s}$ can never pass $\infty$.
\end{lemma}

\begin{proof}
First define 

\begin{equation*}
L :=  \log\left(-\bar{s}\right) = \log \left(-F_1\right) - \log F_2
\end{equation*}
This doesn't make sense for $\bar{s} \geq 0$, but for the remainder of the proof we will only be concerned with negative values of $\bar{s}$, so this causes no problems. We can use \eqref{FMDE} to find the differential equation governing $L$. This differential equation will have three terms, the first two of which come from jumps:

\begin{itemize}
 \item $dF_1/dU = -F_2$ and $dF_2/dU = 0$. When $F_1 \rightarrow F_1 - F_2 dU$, $\log(-F_1) \rightarrow \log (-(F_1 - F_2 dU))$, so $dL = \log(-(F_1 - F_2 dU)) - \log(-F_1) = \log(1-dU/\bar{s})$. So $dL$ has a $\log(1-dU/\bar{s})$ term.
 
 \item $dF_2/dZ = F_2$ and $dF_1/dZ = 0$. When $F_2 \rightarrow F_2 + F_2 dZ$, $\log(F_2) \rightarrow \log (F_2 + F_2 dZ)$, so $dL$ has a $-\log(1+dZ)$ term.
 
 \item At non-integer values of $t$, $L$ is continuous in $t$, so we may use the quotient rule to compute that $dL$ has a $\frac{\lambda}{\sin \theta} (\bar{s} + 1/\bar{s})dt$ term.
\end{itemize}
So the differential equation governing $L$ is

\begin{equation*}
dL = \frac{\lambda}{\sin \theta}\left(\bar{s}+1/\bar{s}\right)dt - \log\left(1+dZ\right) + \log \left(1-dU/\bar{s}\right)
\end{equation*}
and if we integrate both sides between $t_1^-$ and $t_2$ we get

\begin{equation}
\label{solveMixODE}
L_{t_1^-}-L_{{t_2}} = \int_{t_1}^{t_2} \frac{\lambda}{\sin \theta} \left(e^L + e^{-L}\right)dt + \int_{t_1^-}^{t_2} \log\left(1+dZ\right) - \int_{t_1^-}^{t_2} \log\left(1- \frac{dU}{\bar{s}}\right).
\end{equation}
Here, the second and third integral correspond to summing the integrands over the jumps of $Z$ and $U$. Also, note that both sides absorbed a negative sign. Now let $t_2 = \inf \{t: \bar{s} \geq -1/\beta \} $, and let $t_1 = \sup_{t<t_2} \{t: \bar{s} \leq - \epsilon / \lambda \}$.  Then we have the following inequalities: 

\begin{equation*}
\label{LLowB}
 L_{t_1^-} \geq \log \epsilon / \lambda
\end{equation*}

\begin{equation*}
\label{LUpB}
L_{t_2} \leq \log 1/\beta  
\end {equation*}
so that

\begin{equation}
\label{LbndB}
L_{t_1^-} - L_{t_2} \geq \log \epsilon/\lambda - \log 1/\beta.  
\end {equation}
When $t_1 \leq t \leq t_2$ we have:

\begin{equation}
\label{expLbp}
\epsilon/\lambda \geq e^L \geq 1/\beta
\end {equation}
and

\begin{equation}
\label{expLbn}
\lambda/\epsilon \leq e^{-L} \leq \beta.
\end {equation}
Since $Y_t$ is piecewise constant $dZ = 0$ at non-integer times, so $Y_{t+1} - Y_t = Y_t dZ$ by the definition of $Z$, meaning $dZ+1 = Y_{t+1}/Y_t$ at integer times. Hence 

\begin{equation}
\label{Zint}
\int_{t_1^-}^{t_2} \log\left(1+dZ\right) = \log \left(Y_{t_2}/Y_{t_1^-}\right) = \log Y_{t_2} - \log Y_{t_1^-} .
\end{equation}
Since $\Delta U$ is upper bounded by $M$, $-\bar{s}$ is lower bounded by $1/\beta$ on the interval we are considering, and $\beta \leq \left(2M\right)^{-1}$, we have $|dU/\bar{s}| \leq M\beta \leq 1/2$. For $x \leq 1/2$ we can use $-\log \left(1-x\right) < 2x$ to get

\begin{equation}
\label{Uint}
-\int_{t_1^-}^{t_2} \log\left(1 - dU/\bar{s}\right) \leq \left(\lfloor t_2 \rfloor - \lfloor t_1^- \rfloor\right) 2M \beta .
\end{equation}
We are now able to continue integrating in equation (\ref{solveMixODE}). Combining (\ref{LbndB}) -- (\ref{Uint}), (\ref{solveMixODE}) implies that

\begin{equation*}
\log \epsilon \beta / \lambda \leq \left(t_2 - t_1\right) \frac{\lambda}{\sin \theta} \left(\epsilon/\lambda + \beta\right) + \log Y_{t_2} - \log Y_{t_1^-} + \left(\lfloor t_2 \rfloor - \lfloor t_1^- \rfloor\right)2M \beta
\end{equation*}
and by rearranging, we have:

\begin{equation*}
\log Y_{t_2} - \log Y_{t_1^-} +\left(t_2 - t_1\right)\left[\left(\epsilon + \lambda \beta\right)/\sin \theta\right] + \left(\lfloor t_2 \rfloor - \lfloor t_1^- \rfloor\right)2M \beta \geq \log \epsilon \beta / \lambda.
\end{equation*}
For this inequality to hold, the process $\log Y_n + \left[\left(\epsilon + \lambda \beta\right)/\sin \theta + 2M \beta\right]n$ must have a backtrack of size at least $\log \epsilon \beta / \lambda$ between $\lfloor t_1^- \rfloor$ and $\lceil t_2 \rceil$.  So such backtracks are necessary in order for $\bar{s}$ to move through through the range between $-\epsilon/\lambda$ to $-1/\beta$, which is necessary for $\bar{s}$ to pass $\infty$. In particular, we get the condition that in order for $\bar{s}$ to pass $\infty$, the process $\log Y_n + \left[\left(\epsilon + \lambda \beta\right)/\sin \theta + 2M\beta\right]n$ must backtrack by at least $\log \epsilon\beta/\lambda$.

\end{proof}

\subsection{Proof of Theorem \ref{tmbktkblp}}

\begin{proof}

Define $N_n$ to be the number of eigenvalues of $H_{\omega,n}$ in the interval $\left[\lambda_0, \lambda_0 + \lambda\right]$.  By Lemma \ref{keyObv}, $N_n$ is equal to the number of times the process $\{\mathscr{P}\left[ V_k \right] \}_{k=1}^n$ passes the point $\mathscr{P}[v_*]$, and so from Theroem \ref{RotThm} we get that $N_n$ is less than or equal to the number of times the process $s_t$ passes the point $\mathscr{P}[v_*]$, which is no more than $1$ plus the number of times the process $s_t$ passes $\infty$.

Lemma \ref{bktkblup} tells us that in order for the process $\bar{s}$, and therefore the process $s_t$ to pass $\infty$, there must be a backtrack as large as $\log \epsilon\beta/\lambda$ in the process $\log Y_n + \left[\left(\epsilon + \lambda \beta\right)/\sin \theta + 2M\beta\right]n$. 
\end{proof}


Theorem \ref{tmbktkblp} gives a deterministic result relating the number of eigenvalues of a finite level schrodinger operator to the number of large backtracks of the imaginary part of a random walk. It also relies on the existence of a bound on the jumps of the real part of that random walk. We now prove that such a bound exists.

\subsection{Bounding The Real Part}

\begin{theorem}
\label{MBound}
 Let $X_n$ and $Y_n$ be defined as in Section \ref{MoTM}, with $\sigma \in [0,1]$, $\theta$ arbitrary, $|\omega_i| \leq c_0$ and $c_0 \geq 1$. Then for all $k \geq 0$
  
 \begin{equation*}
  \frac{|X_{k+1}-X_k|}{Y_k} \leq \frac{\sqrt{5}}{2} \frac{\sigma c_0^2}{\sin^2 \theta}.
 \end{equation*}
\end{theorem}

\begin{proof}
 Define
 
 \begin{equation}
 \label{d1}
  d_1(x+iy, x' + iy') = \frac{|x-x'|}{y}
 \end{equation}
 and also
 
 \begin{equation*}
 \label{d2}
  d_2(x+iy, x' + iy') = \frac{(x-x')^2 + (y-y')^2}{yy'}.
 \end{equation*}
 
 \begin{lemma}
  $d_2$ is invariant under M\"obius transforms, namely
  
  \begin{equation}
   d_2(z,z') = d_2(Tz,Tz')
  \end{equation}
 for any $T$ fixing the UHP.
  
 \end{lemma}

 \begin{proof}
 
It suffices to check the following 3 cases:

$d_2$ is invariant under shifts:
\begin{align*}
 d_2( z+d, z'+d) = \frac{\left( (x+d) - (x'+d) \right)^2 + (y-y')^2}{yy'} = d_2( z, z')
\end{align*}

$d_2$ is invariant under dialations:
\begin{align*}
 d_2( \alpha z, \alpha z') = \frac{\alpha^2 (x-x')^2 + \alpha^2(y-y')^2  }{\alpha y \alpha y'} = d_2( z, z')
\end{align*}

$d_2$ is invariant under inversion:
\begin{align*}
 d_2( 1/z, 1/z') &= d_2(\frac{x-iy}{|z|^2},\frac{x'-iy'}{|z'|^2}) \\
 &= \frac{ \left( x/|z|^2 - x'/|z'|^2 \right)^2 + \left( -y/|z|^2 + y'/|z'|^2 \right)^2}{yy'/|z|^2 |z|^2} \\
 &= \frac{|z|^2|z'|^2}{yy'} \left[ \frac{x^2}{|z|^4} - \frac{2xx'}{|z|^2|z'|^2} + \frac{(x')^2}{|z'|^4} + \frac{y^2}{|z|^4} - \frac{2yy'}{|z|^2|z'|^2} + \frac{(y')^2}{|z'|^4}   \right] \\
 &= \frac{1}{yy'} \left[ (x^2+y^2) \frac{|z'|^2}{|z|^2} + ((x')^2 + (y')^2)\frac{|z|^2}{|z'|^2} - 2(xx'+yy') \right] \\
 &= \frac{1}{yy'} \left[ |z'|^2 + |z|^2 - 2(xx'+yy') \right]\\
 &= \frac{(x-x')^2 + (y-y')^2}{yy'} = d_2( z, z').
\end{align*}

 \end{proof}
 
 \begin{lemma}
  \begin{equation*}
   d_1^2 \leq d_2(1+\frac{d_2}{4})
  \end{equation*}
 \end{lemma}

 \begin{proof}
 Write $z = x+iy$, $z' = x'+iy'$. Since both $d_1$ and $d_2$ are invariant under shifts and dialations of the UHP, we may assume that $x = 0$ and $y = 1$. Then
 
 \begin{equation*}
  d_1(z,z') = |x'|
 \end{equation*}
and

  \begin{equation*}
  d_2(z,z') = \frac{(x')^2 + (1-y')^2}{y'}.
 \end{equation*}
Now we can simplify:

\begin{equation*}
 d_2(z,z') \left( 1 + \frac{d_2(z,z')}{4}  \right) - (x')^2 = \frac{((x')^2+1-(y')^2)^2}{(4y')^2} \geq 0
\end{equation*}
so that

\begin{equation*}
 d_2(1+\frac{d_2}{4}) \geq (x')^2 = d_1^2
\end{equation*}
completing the proof.
 \end{proof}
Now we have the following:
 
 \begin{equation}
 \label{Mformula}
  \frac{|X_k - X_{K+1}|}{Y_k} = d_1 \left(W_k^{-1} \circ z, W_{k+1}^{-1} \circ z \right) \leq \sqrt{d_2 \left(W_k^{-1} \circ z, W_{k+1}^{-1} \circ z \right) \left( 1 + d_2 \left(W_k^{-1} \circ z, W_{k+1}^{-1} \circ z \right)/4 \right) }.
 \end{equation}
But we can bound $d_2 \left(W_k^{-1} \circ z, W_{k+1}^{-1} \circ z \right)$ as follows:

\begin{align*}
 d_2 \left(W_k^{-1} \circ z, W_{k+1}^{-1} \circ z \right) &= d_2 \left(W_k^{-1} \circ z, W_{k}^{-1} T_{k+1}^{-1} \circ z \right) \\
 &= d_2(z, T_{k+1}^{-1} \circ z) \\
 &= d_2(T_{k+1} \circ  z, z).
\end{align*}
When $\omega = 0$ we have that $T_{k+1}^{\omega=0} \circ z = z$, so

\begin{align*}
 d_2(T_{k+1} \circ  z, z) &= d_2(T_{k+1} \circ  z, T_{k+1}^{\omega=0} \circ z) \\
 &= d_2 \left( \frac{(\lambda_0-\sigma \omega_{k+1})z-1}{z}, \frac{\lambda_0 z - 1}{z}  \right)\\
 &= d_2 \left( \lambda_0 - \sigma \omega_{k+1} - \bar{z}, \lambda_0 - \bar{z}  \right).
\end{align*}
 By invariance under M\"obius transforms, this is equal to
 
 \begin{equation*}
  d_2 \left( - \sigma \omega_{k+1} + i\sin \theta, i \sin \theta  \right)
 \end{equation*}
which can be computed to get

\begin{equation*}
 d_2 \left(W_k^{-1} \circ z, W_{k+1}^{-1} \circ z \right) = \frac{(\sigma \omega_{k+1})^2}{\sin^2\theta}.
\end{equation*}
Using this bound in \eqref{Mformula} gives

\begin{equation*}
 \frac{|X_{k+1}-X_k|}{Y_k} \leq \sqrt{\frac{(\sigma c_0)^2}{\sin^2\theta} \left( 1 + \frac{(\sigma c_0)^2}{4 \sin^2 \theta} \right)  }
\end{equation*}
and since we have $\sin\theta \leq 1$, $c_0 \geq 1$, and $\sigma \leq 1$, we get
\begin{equation*}
 \frac{|X_{k+1}-X_k|}{Y_k} \leq \frac{\sqrt{5}\sigma c_0^2}{\sin^2\theta}.
\end{equation*}
 
\end{proof}

\section{Bounding Backtracks}
\label{sect4}

%
%
%
%
%
%
%

\subsection{The Figotin-Pastur Vector}

\begin{lemma}
\label{cuteTrick}
 Let $\tilde{M}$ be a $2 \times 2$ matrix with determinant $1$.  Then

 \begin{equation*}
  \operatorname{Im} \left(\tilde{M}^{-1} \circ i \right) = \left \| \tilde{M} \begin{bmatrix} 1 \\ 0 \end{bmatrix} \right\|^{-2}.
 \end{equation*}

\end{lemma}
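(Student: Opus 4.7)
The plan is to parametrize $\tilde M$ explicitly and invoke the standard imaginary-part formula for Möbius transformations. Write $\tilde M = \begin{pmatrix} a & b \\ c & d \end{pmatrix}$ with $ad - bc = 1$, so that $\tilde M^{-1} = \begin{pmatrix} d & -b \\ -c & a \end{pmatrix}$. Then $\tilde M^{-1} \circ i$ denotes the Möbius action on the upper half-plane evaluated at $z = i$, that is,
\begin{equation*}
 \tilde M^{-1} \circ i = \frac{di - b}{-ci + a}.
\end{equation*}

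Next I would either recall the general identity $\operatorname{Im}(g \cdot z) = \operatorname{Im}(z)/|cz+d|^2$ (applied to $g = \tilde M^{-1}$ and $z = i$), or prove it inline by multiplying numerator and denominator by the conjugate $\overline{-ci+a} = ci + a$. The numerator becomes $(di - b)(a + ci) = -(ab + cd) + i(ad - bc) = -(ab+cd) + i$, using the determinant condition, while the denominator becomes $a^2 + c^2$. Thus $\operatorname{Im}(\tilde M^{-1} \circ i) = 1/(a^2 + c^2)$.

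The right-hand side is then immediate: $\tilde M \begin{bmatrix} 1 \\ 0 \end{bmatrix} = \begin{bmatrix} a \\ c \end{bmatrix}$, whose squared Euclidean norm is exactly $a^2 + c^2$, and taking reciprocals matches the left-hand side.

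There is no serious obstacle here; the only thing to watch is sign bookkeeping when inverting $\tilde M$ and when multiplying out $(di-b)(a+ci)$, so that the determinant condition $ad - bc = 1$ is what kills the real contribution to the numerator and leaves a clean $i$. Everything else is a direct calculation.
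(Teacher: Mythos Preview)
Your proof is correct and follows essentially the same route as the paper: write $\tilde M=\begin{pmatrix}a&b\\c&d\end{pmatrix}$, invert using $\det\tilde M=1$, apply the M\"obius action at $z=i$, and simplify the imaginary part by multiplying numerator and denominator by the conjugate to obtain $1/(a^2+c^2)=\|\tilde M\begin{pmatrix}1\\0\end{pmatrix}\|^{-2}$. The only addition is your optional appeal to the standard identity $\operatorname{Im}(g\cdot z)=\operatorname{Im}(z)/|cz+d|^2$, which is a convenient shortcut but not a genuinely different argument.
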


\begin{proof}

Write

 \begin{equation*}
  \tilde{M} =  \begin{bmatrix} a & b \\ c & d \end{bmatrix}
 \end{equation*}
so that we have

 \begin{align*}
  \operatorname{Im} \left(\tilde{M}^{-1} \circ i\right) &= \operatorname{Im} \left( \begin{bmatrix} d & -b \\ -c & a \end{bmatrix} \circ i\right) \\
  &= \operatorname{Im} \frac{id-b}{-ic+a}\\
  &= \frac{\operatorname{Im}\left(\left(id-b\right)\left(a+ic\right)\right)}{a^2+c^2} \\
  &= \frac{1}{a^2+c^2}\\
  &= \left \| \tilde{M} \begin{bmatrix} 1 \\ 0 \end{bmatrix} \right\|^{-2}.
 \end{align*}

\end{proof}

We want to understand the backtracks of the $\log Y_t$ process, which means we we want to follow the log of $\operatorname{Im} \left(\left(A \bar{W}_t\right)^{-1} \circ i\right)$.
Lemma (\ref{cuteTrick}) allows us to instead follow $1/ ||\gamma_t||^2$, where

\begin{equation*}
\gamma_t :=  A\bar{W}_t \begin{bmatrix} 1 \\ 0 \end{bmatrix}
\end{equation*}
which is the well-known Figotin-Pastur vector for which a recurrence relation is known.
Define

\begin{equation*}
  \mathscr{P}[\gamma_k] = \sqrt{r_k} e^{i \alpha_k}
\end{equation*}
so that

\begin{equation*}
  \mathscr{P}[Y_k^{-1}] = r_k = ||\gamma_k||^2
\end{equation*}
and recall that

\begin{equation*}
  z = e^{i\theta}
\end{equation*}
and

\begin{equation*}
  \rho = \frac{1}{2\sin \theta} = \frac{1}{|1-z^2|}.
\end{equation*}
Then from \cite{BS2} we have the recurrence relations

\begin{equation}
\label{rkrecur}
 r_{k+1} = r_{k} ( 1 + 2 \sigma^2 \omega^2_{k+1} \rho^2 +  2 \sigma \omega_{k+1} \rho \sin \left( 2\alpha_{k}+2\theta \right) - 2 \sigma^2 \omega^2_{k+1} \rho^2 \cos \left(2\alpha_{k}+2\theta \right))
\end{equation}
and

\begin{equation}
\label{angRecur}
 e^{2 i \alpha_{k+1}} = e^{2 i\alpha_{k}} z^2 + \frac{\sigma \omega_{k+1} i \rho \left(z^2 e^{2i\alpha_{k}} - 1\right)^2 }{1 + \sigma \omega_{k+1} i \rho \left( 1 - z^2 e^{2i\alpha_{k}}\right)}
\end{equation}
and the non-recursive expression for $r_k$

\begin{equation*}
 r_{k} = \prod_{j=1}^{k-1} \left(1 + 2 \sigma^2 \omega^2_{j} \rho^2 +  2 \sigma \omega_{j} \rho \sin\left(2\alpha_{j-1}+2\theta\right) - 2 \sigma^2 \omega^2_{j} \rho^2 \cos \left(2\alpha_{j-1}+2\theta\right)\right).
\end{equation*}

\subsection{Martingales}
In what follows, we will use a martingale argument to bound the probability of a large backtrack of the process $\log Y_n + \kappa n$, with $Y_n$ as in the previous section and $\kappa$ sufficiently small. We will use a function of $Y_n$ which, raised to the power of $1-\delta$, is a supermartingale for an appropriate choice of $\delta$. This $\delta$ will need to be big enough to make the process a supermartingale, but it can't be too large or else it will ruin the bound we are trying to get. We find lower and upper bounds for $\delta$; the lower bound is the more important bound, necessary to ensure we are working with a supermartingale, where as the upper bound we choose is for technical reasons, specifically to bound a Taylor expansion cutoff, and could be chosen differently if desired.

\begin{lemma}
 \label{smgale}
 Assume there are positive constants $c_1\ldots c_7$ so that the following holds. 
 Let $X_k$ be a sequence of random variables such that
 \begin{equation*}
 E\left(X_k|\mathcal{F}_{k-1}\right) = \sigma^2 B_{k-1},\qquad
  E\left(X_k^2|\mathcal{F}_{k-1}\right) = \sigma^2 A_{k-1},
 \end{equation*}
 where $|A_k|\le 9c_0 \rho^3$, $|B_k|\le 4\rho^2$, and $|X_k|\le c_1\sigma$, and where $\mathcal{F}_{k}$ is the sigma algebra generated by $\omega_1, \dots, \omega_k$.  Assume further that there exists a constant $\tilde c$ and some functions $F_k, G_k$ such that with $\Delta F_k=F_k-F_{k-1}$ we have
 \begin{equation}
 \label{Fdef}
|B_k-\Delta F_k - \tilde{c} | \le c_3 \sigma,\qquad |A_k - \Delta  G_k - \tilde{c} | \le c_5\sigma,
 \end{equation}
 and
\begin{equation}\label{deltaFGBounds}
|\Delta F_k| \le c_2, \qquad |\Delta G_k|\le c_4.
\end{equation}
Then for $\kappa \in[0,1]$, $\sigma$ satisfying
\begin{equation}
\label{pickSig}
\sigma \le \max(c_1,c_6,(c_2+c_4)^{1/2})^{-1}
\end{equation}
and for $\delta$ satisfying

 \begin{equation}\label{asdelta}
\frac{2\sigma}{\tilde{c}}\left(\frac{2\kappa}{\sigma^3} + c_3 + c_5 + 2c_7\right)\leq \delta \leq \frac12
 \end{equation}
with $c_7$ as in \eqref{c6}, the following process is a supermartingale:

 \begin{equation*}
  \Pi_k = e^{\sigma^2 \left(1 - \delta \right)\left(F_{k-1} - \left(1-\delta/2\right)G_{k-1}\right)} \prod_{i=1}^k \left(e^{-\kappa} \left( 1 + X_i \right)\right)^{\delta-1}.
 \end{equation*}
\end{lemma}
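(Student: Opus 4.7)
The plan is to verify $E[\Pi_k\mid\mathcal{F}_{k-1}]\le\Pi_{k-1}$ directly by computing the ratio and bounding its one genuinely random factor. Since $B_j$ and $A_j$ are $\mathcal{F}_j$-measurable (being conditional expectations), so are $F_j,G_j$, and therefore the prefactor in
\[
\frac{\Pi_k}{\Pi_{k-1}} = \exp\!\Bigl(\sigma^2(1-\delta)\bigl[\Delta F_{k-1}-(1-\tfrac{\delta}{2})\Delta G_{k-1}\bigr]+\kappa(1-\delta)\Bigr)\,(1+X_k)^{\delta-1}
\]
is $\mathcal{F}_{k-1}$-measurable. Taking logarithms, the supermartingale condition reduces to
\begin{equation}\label{targetineq}
\log E\!\bigl[(1+X_k)^{\delta-1}\,\big|\,\mathcal{F}_{k-1}\bigr]\;\le\;-\sigma^2(1-\delta)\bigl[\Delta F_{k-1}-(1-\tfrac{\delta}{2})\Delta G_{k-1}\bigr]-\kappa(1-\delta).
\end{equation}

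To bound the left side I would Taylor expand $(1+x)^{\delta-1}$ to second order. The constraint \eqref{pickSig} keeps $|X_k|\le c_1\sigma$ in a regime where the remainder satisfies $|R(x)|\le K|x|^3$ uniformly for $\delta\in[0,\tfrac12]$; conditioning on $\mathcal{F}_{k-1}$, using $|X_k|^3\le c_1\sigma\,X_k^2$ together with the moment hypothesis $E[X_k^2\mid\mathcal{F}_{k-1}]=\sigma^2 A_{k-1}$ and the bound on $A_k$, yields a conditional error bounded by $c_7\sigma^3$ -- this is the estimate referenced by \eqref{c6}. Applying $\log(1+y)\le y$, substituting $B_{k-1}=\Delta F_{k-1}+\tilde c+r_F$ and $A_{k-1}=\Delta G_{k-1}+\tilde c+r_G$ with $|r_F|\le c_3\sigma$, $|r_G|\le c_5\sigma$, and exploiting the identity $\tfrac{(\delta-1)(\delta-2)}{2}=(1-\delta)(1-\tfrac{\delta}{2})$, the $\Delta F_{k-1}$ and $\Delta G_{k-1}$ contributions on the two sides of \eqref{targetineq} cancel exactly.

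What survives is the inequality
\[
-(1-\delta)\tilde c\,\tfrac{\delta}{2}\sigma^2+(1-\delta)\sigma^3(c_3+c_5)+c_7\sigma^3 \;\le\; -\kappa(1-\delta),
\]
which, after dividing by $1-\delta$ and using $1/(1-\delta)\le 2$ (from $\delta\le\tfrac12$), rearranges into the lower bound in \eqref{asdelta}. The remaining $(c_2+c_4)^{1/2}$ clause of \eqref{pickSig} plays no role here but keeps $\sigma^2\Delta F$ and $\sigma^2\Delta G$ uniformly bounded, ensuring the deterministic prefactor is well-behaved. The main obstacle, and the reason for imposing $\delta\le\tfrac12$, is securing a uniform third-order remainder bound: the coefficient $\binom{\delta-1}{3}$ and the factor $(1+x)^{\delta-4}$ both misbehave as $\delta\to 1$ or $x\to -1$, so one needs $\delta$ bounded away from $1$ and $|X_k|$ bounded away from $-1$ to get a clean $c_7\sigma^3$ estimate. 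Everything else is coefficient matching, with the negative drift $-(1-\delta)\tilde c\delta\sigma^2/2$ dominating both the $O(\sigma^3)$ expansion errors and the $\kappa(1-\delta)$ contribution precisely when $\delta$ meets its lower bound.
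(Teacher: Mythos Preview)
Your argument is correct and follows the same core idea as the paper: Taylor expand $(1+X_k)^{\delta-1}$ to second order, identify the $B_{k-1}$ and $A_{k-1}$ contributions, replace them via \eqref{Fdef} to produce the drift term $-(1-\delta)\tfrac{\delta}{2}\tilde c\,\sigma^2$, and verify that the lower bound in \eqref{asdelta} makes this dominate the $O(\sigma^3)$ errors and the $\kappa$ term.

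The one organizational difference is that you take logarithms first, so the exponential prefactor $e^{\sigma^2(1-\delta)(\Delta F_{k-1}-(1-\delta/2)\Delta G_{k-1})}$ enters \emph{exactly} on the right of \eqref{targetineq} and never needs to be expanded. The paper instead writes the ratio as $(1+a)(1+b)$, Taylor-expands both factors, and controls the cross term $ab$; this is precisely where $c_2,c_4,c_6$ enter the paper's error constant $c_7$ in \eqref{c6}. So your remark that the $(c_2+c_4)^{1/2}$ clause ``plays no role'' is correct for your route, but your parenthetical that the resulting error ``is the estimate referenced by \eqref{c6}'' is not quite right: your third-order error involves only $c_1$ and the bound on $A_k$, and is strictly smaller than the paper's $c_7$. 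This is harmless, since the hypothesis \eqref{asdelta} assumes the larger $c_7$, hence a fortiori your weaker condition.
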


\begin{proof}

\begin{align*}
E(\Pi_k |\mathcal{F}_{k-1}) =
 \Pi_{k-1} &E(e^{\sigma^2(1-\delta) (\Delta F_{k-1} - (1-\delta/2)\Delta G_{k-1})}(e^{-\kappa}(1+X_k))^{\delta-1}|\mathcal{F}_{k-1})
\end{align*}
We will write
\begin{align}\notag
1+a:=E((1+X_k)^{\delta-1}|\mathcal{F}_{k-1}), \qquad 1+b:= e^{\sigma^2(1-\delta) (\Delta F_{k-1} - (1-\delta/2)\Delta G_{k-1})}
\end{align}
and it suffices to show that

\begin{equation*}
(1+a)(1+b)\le e^{-\kappa}.
\end{equation*}
First we get two bounds on $a$:

For $\delta\in[0,1/2]$ and $|x|\le1/ 4$, Taylor expansion gives $|(1+x)^{\delta-1} -1|\le2|x|$, giving the bound

\begin{equation}\label{b1}
|a|\le 2c_1\sigma.
\end{equation}
Taking the Taylor expansion one term further gives

\begin{equation*}
 (1+x)^{\delta-1} \le 1-(1-\delta)(x -(1-\delta/2) x^2) + 3|x|^3.
\end{equation*}
Since $|X_k|\le c_1\sigma \le 1/4$ we get the more precise bound on $a$:

\begin{equation}\label{b2}
a \le -\sigma^2(1-\delta)(B_{k-1} - (1-\delta/2)A_k)+3c_1^3\sigma^3.
\end{equation}
Now we get a bound on $b$:

For $|x|\le 1$ we have the two inequalities $|e^x-1|\le 2|x|$ and $e^x\le 1+x+x^2$. Note that by \eqref{deltaFGBounds} we have $|\Delta F|+ |\Delta G|\le c_2+c_4$. The first inequality gives that for $\sigma^2\le 1/(c_2+c_4)$ we have the bound on $b$:

\begin{equation}\label{b3}
 |b|\le2 (c_2+c_4)\sigma^2.
\end{equation}
The second inequality gives more precisely:

\begin{equation}\label{b4}
b\le \sigma^2(1-\delta) (\Delta F_{k-1} - (1-\delta/2)\Delta G_{k-1}) +
\sigma^4 (c_2+c_4)^2.
\end{equation}
If $\sigma<1/c_6$, the last term is at most $\sigma^3(c_2+c_4)^2/c_6$. To bound the product $(1+a)(1+b)$ we use the finer bounds for $a+b$ and the rough bounds for $|ab|$. Combining (\ref{b1},\ref{b2},\ref{b3},\ref{b4}) this way, we get an upper bound of

\begin{equation}\label{bound3}
1+\sigma^2(1-\delta)(\Delta F_{k-1} - B_{k-1} + (1-\delta/2)(A_{k-1}-\Delta G_{k-1}))+\mbox{error}
\end{equation}
where

\begin{equation}\label{c6}
\mbox{error} \le (3c_1^3 + (c_2+c_4)^2/c_6+ 4c_1(c_2+c_4))\sigma^3 := c_7\sigma^3.
\end{equation}
Now by assumption \eqref{Fdef}, the quantity \eqref{bound3} is at most

\begin{equation*}
1+\sigma^2(1-\delta)\left(c_3 \sigma +c_5\sigma -\delta\tilde c/2 \right)+c_7\sigma^3
\end{equation*}
where the term in the brackets is negative by the lower bound in \eqref{asdelta}, so by the upper bound in  \eqref{asdelta} we get that

\begin{equation*}
1+\frac{\sigma^2}{2} \left(c_3 \sigma +c_5\sigma -\delta\tilde c/2 \right)+c_7\sigma^3  \le 1-\kappa\le e^{-\kappa},
\end{equation*}
where the first inequality is equivalent to the left inequality of \eqref{asdelta}. This completes the proof.
\end{proof}

We will assume (and heavily use) for the rest of the paper that
\begin{equation}\label{sigma}
  \sigma\le \frac{2\sin\theta |\sin 2\theta|}{10c_0^3}, \mbox{ implying }\quad \sigma \le \frac{4\sin^2 \theta}{10c_0^3}= \frac1{10\rho^2c_0^3}, \qquad \sigma \le  \frac{\sin \theta}{5c_0^3}=\frac1{10\rho c_0^3} \leq \frac1{10\rho c_0}.
\end{equation}
The last inequality, combined with the fact that $c_0$, an absolute bound on a random variable of variance $1$, satisfies

\begin{equation*}
 c_0 \leq 1
\end{equation*}
gives

\begin{equation}\label{sigmaBetter}
 \sigma c_0 \rho \leq \frac1{10}.
\end{equation}

\begin{lemma}
\label{supMgale}
If $E(\omega_j) = 0$, $E(\omega_j^2) = 1$, $|\omega_j| \le c_0$, then there exist functions $F_k$ and $G_k$ satisfying
$$
|F_k|\le 4\rho^3, \quad |G_k|\le \frac{2\rho^2}{|\sin 2\theta|}
$$
so that for $\sigma$ satisfying \eqref{sigma}, $\kappa\in[0,1]$ and $\delta$ satisfying
\begin{equation*}
\frac{\kappa}{\sigma^2 \rho^2} + 224 \frac{c_0^3\rho\sigma}{|\sin 2\theta|}  \leq \delta \leq \frac12
\end{equation*}
we have that with

\begin{equation*}
r_k= \prod_{j=1}^{k-1} \left(1 + 2 \sigma^2 \omega^2_{j} \rho^2 +  2 \sigma \omega_{j} \rho \sin(2\alpha_{j-1}+2\theta)\\
 - 2 \sigma^2 \omega^2_{j} \rho^2 \cos (2\alpha_{j-1}+2\theta)\right)
\end{equation*}
the following process is a supermartingale

\begin{equation}
 e^{(F_{k-1} - (1-\delta/2)G_{k-1})\sigma^2 (1 - \delta)} (e^{-\kappa k} r_k)^{(\delta-1)}.
\end{equation}
\end{lemma}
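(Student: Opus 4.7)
The plan is to apply Lemma~\ref{smgale} to the factors appearing in $r_k$. Setting
\[
X_j := 2\sigma^2\omega_j^2\rho^2 + 2\sigma\omega_j\rho\sin(2\alpha_{j-1}+2\theta) - 2\sigma^2\omega_j^2\rho^2\cos(2\alpha_{j-1}+2\theta),
\]
we have $r_k=\prod_{j=1}^{k-1}(1+X_j)$, matching the product form of Lemma~\ref{smgale} up to an index shift. A direct computation using $E(\omega_k|\mathcal{F}_{k-1})=0$, $E(\omega_k^2|\mathcal{F}_{k-1})=1$, and $|\omega_k|\le c_0$ then gives
\[
B_{k-1} = 2\rho^2\bigl(1-\cos(2\alpha_{k-1}+2\theta)\bigr),\qquad A_{k-1} = 4\rho^2\sin^2(2\alpha_{k-1}+2\theta) + O(\sigma\rho^3 c_0);
\]
the bounds $|B_k|\le 4\rho^2$, $|A_k|\le 9c_0\rho^3$ and $|X_k|\le 2c_0\rho\sigma=:c_1\sigma$ all follow from \eqref{sigmaBetter}.

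The heart of the argument is the choice $\tilde c := 2\rho^2$ together with the construction of $F_k$, $G_k$, so that
\[
B_k-\tilde c = -2\rho^2\cos(2\alpha_k+2\theta),\qquad A_k-\tilde c = -2\rho^2\cos(4\alpha_k+4\theta) + O(\sigma\rho^3 c_0)
\]
become purely oscillatory at leading order. By \eqref{angRecur}, $e^{2i\alpha_k} = z^2e^{2i\alpha_{k-1}} + O(\sigma\rho c_0)$, so the phase $2\alpha_k+2\theta$ rotates by $2\theta$ per step at leading order, and I would take the explicit trigonometric antiderivatives
\[
F_k := -\rho^2\cot(\theta)\sin(2\alpha_k+2\theta) - \rho^2\cos(2\alpha_k+2\theta),\qquad G_k := -\rho^2\cot(2\theta)\sin(4\alpha_k+4\theta) - \rho^2\cos(4\alpha_k+4\theta);
\]
a short trigonometric identity check verifies that these give $F(\alpha_{k-1}+\theta)-F(\alpha_{k-1}) = -2\rho^2\cos(2(\alpha_{k-1}+\theta)+2\theta)$ and analogously for $G$. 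The size bounds $|F_k|\le 4\rho^3$ and $|G_k|\le 2\rho^2/|\sin 2\theta|$ follow from $|\cot\theta|\le 2\rho$ and $|\cot 2\theta|\le 1/|\sin 2\theta|$ together with $\rho\ge 1/2$. The deviation $\alpha_k-\alpha_{k-1}-\theta=O(\sigma\rho c_0)$ from the leading rotation then produces admissible errors $|B_k-\Delta F_k-\tilde c|=O(\sigma\rho^4 c_0)$ and $|A_k-\Delta G_k-\tilde c|=O(\sigma\rho^3 c_0/|\sin 2\theta|)$ via a first-order Taylor expansion of the trig formulas, together with the step-size bounds $|\Delta F_k|=O(\rho^3)$ and $|\Delta G_k|=O(\rho^2/|\sin 2\theta|)$.

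Finally, $c_7$ from \eqref{c6} is dominated by $3c_1^3 = O(c_0^3\rho^3)$, so the left-hand side of \eqref{asdelta} becomes
\[
\frac{2\sigma}{2\rho^2}\!\left(\frac{2\kappa}{\sigma^3} + c_3+c_5+2c_7\right) = O\!\left(\frac{\kappa}{\sigma^2\rho^2}\right) + O\!\left(\frac{c_0^3\rho\sigma}{|\sin 2\theta|}\right),
\]
where $c_0\ge 1$ has been used to fold the $c_3$- and $c_5$-contributions into the $c_0^3\rho\sigma/|\sin 2\theta|$ term; tracking the implicit constants yields the claimed upper bound $224\,c_0^3\rho\sigma/|\sin 2\theta|$, and Lemma~\ref{smgale} then produces the desired supermartingale. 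The main obstacle will be the Taylor-expansion bookkeeping when computing $F_k-F_{k-1}$ and $G_k-G_{k-1}$ from the nonlinear recurrence \eqref{angRecur}, so that \eqref{Fdef}--\eqref{deltaFGBounds} hold with the quantitative constants required by the final $\delta$-bound.
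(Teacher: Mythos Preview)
Your plan is correct and follows the same overall strategy as the paper: identify $X_k$, $B_k$, $A_k$, take $\tilde c=2\rho^2$, construct $F_k,G_k$ so that the oscillatory parts of $B_k-\tilde c$ and $A_k-\tilde c$ telescope, and then feed the resulting constants into Lemma~\ref{smgale}. In fact your $F_k$ and $G_k$ coincide (up to an irrelevant additive constant depending on $\alpha_1$) with the paper's choices: writing $1/(1-z^2)=ie^{-i\theta}/(2\sin\theta)$ one checks that the paper's
\[
F_k=-2\rho^2\operatorname{Re}\,z^2\frac{e^{2i\alpha_1}-z^2e^{2i\alpha_k}}{1-z^2}
\]
equals $-\tfrac{\rho^2}{\sin\theta}\sin(2\alpha_k+3\theta)+\text{const}=-\rho^2\cot\theta\,\sin(2\alpha_k+2\theta)-\rho^2\cos(2\alpha_k+2\theta)+\text{const}$, exactly your formula; similarly for $G_k$.

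The only genuine difference is how the error $|B_k-\Delta F_k-\tilde c|$ is controlled. You propose a Taylor expansion in $\alpha_k-(\alpha_{k-1}+\theta)=O(\sigma\rho c_0)$; the paper instead sums the recurrence \eqref{angRecur} (and its square) from $1$ to $k-1$, so that the nonlinear remainder appears directly as the term $\sigma\tilde\Sigma/(1-z^2)$ and no expansion is needed. The paper's route is slightly cleaner because $F_k$ is \emph{linear} in $e^{2i\alpha_k}$, so the error is exact rather than first-order; your Taylor step is correct but is really hiding this linearity behind the angle parametrization. Two minor points: your $c_1=2c_0\rho$ drops the $O(\sigma^2 c_0^2\rho^2)$ part of $|X_k|$ (the paper gets $c_1=\tfrac{12}{5}c_0\rho$), and $c_7$ is not dominated by $3c_1^3$ alone since the cross term $4c_1(c_2+c_4)$ carries a $1/|\sin 2\theta|$ factor; both are harmless once you track constants as you indicate.
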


\begin{proof}
First compute
\begin{multline}\label{erv}
 E \left(2 \sigma^2 \omega^2_{j} \rho^2 +  2 \sigma \omega_{j} \rho \sin(2\alpha_{j-1}+2\theta) - 2 \sigma^2 \omega^2_{j} \rho^2 \cos (2\alpha_{j-1}+2\theta) \right) =\\ 2\sigma^2 \rho^2 (1-\cos(2\alpha_{j-1} + 2\theta))
\end{multline}
and define
\begin{equation}
\label{Bdef}
B_{i-1} = 2\rho^2 (1-\cos(2\alpha_{i-1} + 2\theta)).
\end{equation}
Clearly
\begin{equation*}
|B_{i-1}|\le 4\rho^2.
\end{equation*}
Moreover, the random variable in \eqref{erv} is absolutely bounded above by
\begin{equation*}
  4\sigma^2c_0^2\rho^2+2\sigma c_0\rho \le \frac{12}{5} \sigma c_0\rho=:c_1\sigma
\end{equation*}
where the inequality comes from \eqref{sigmaBetter}. Write $\Sigma = \sum_{j=1}^{k} e^{2i\alpha_{j}} $, and sum (\ref{angRecur}) between $1$ and $k-1$ to get

\begin{equation*}
 \Sigma - e^{2i\alpha_1} = z^2 (\Sigma - e^{2i\alpha_k}) + \sigma \sum_{j=1}^{k-1}  \frac{ \omega_{j+1} i \rho (z^2 e^{2i\alpha_{j}} - 1)^2 }{1 + \sigma \omega_{j+1} i \rho ( 1 - z^2 e^{2i\alpha_{j}})}.
\end{equation*}
Call the sum on the right $\tilde{\Sigma}$. By \eqref{sigmaBetter}, $\sigma\rho|\omega_j|\le 1/10$, and the denominator is bounded below in absolute value by $4/5$.
The terms in $\tilde {\Sigma}$ are thus bounded above in absolute value by
$\frac{4c_0\rho }{4/5}=5c_0\rho$.
Rearranging gives

\begin{equation*}
 \Sigma = \frac{e^{2i\alpha_1} - z^2 e^{2i\alpha_k} + \sigma \tilde{\Sigma} }{1-z^2}
\end{equation*}
and multiplying everything by $-2\rho^2z^2 = -2\rho^2 e^{2i\theta}$ and taking the real part of both sides gives

\begin{equation*}
-2\rho^2 \sum_{j=1}^{k} \cos (2\alpha_j+2\theta) = -2\rho^2\operatorname{Re} z^2 \frac{e^{2i\alpha_1} - z^2 e^{2i\alpha_k}}{1-z^2} -2\rho^2\operatorname{Re} z^2 \frac{\sigma \tilde{\Sigma}}{1-z^2}.
\end{equation*}
Call the first term on the right hand side $F_k$. We have
$$
|\Delta F_k|\le \frac{4\rho^2}{|1-z^2|}=4{\rho^3}=:c_2, \qquad |F_k|\le \frac{4\rho^2}{|1-z^2|}=4{\rho^3}.
$$
Moreover we have
\begin{equation*}\label{xx}
|B_k-\Delta F_k - 2 \rho^2 | = |2\rho^2\operatorname{Re} z^2 \frac{\sigma \Delta\tilde{\Sigma_k}}{1-z^2} |
\le 10 c_0\rho^4\sigma  =:c_3\sigma .
\end{equation*}
Now compute
\begin{multline}
 E((2 \sigma^2 \omega^2_{j} \rho^2 +  2 \sigma \omega_{j} \rho \sin(2\alpha_{j-1}+2\theta) - 2 \sigma^2 \omega^2_{j} \rho^2 \cos (2\alpha_{j-1}+2\theta))^2)\\
 \leq 16 c_0^3 \rho^3 \sigma^3 (1+c_0 \rho) + 4 \rho^2 \sigma^2 \sin^2 (2\alpha_{j-1}+2\theta) \\
 = 16 c_0^3 \rho^3 \sigma^3 (1+c_0 \rho) + 2 \rho^2 \sigma^2 - 2 \rho^2 \sigma^2 \cos (4\alpha_{j-1}+4\theta)
\end{multline}
and define

\begin{equation}
\label{Adef}
A_{i-1} = 16 c_0^3 \rho^3 \sigma (1+c_0 \rho) + 2 \rho^2 - 2 \rho^2 \cos (4\alpha_{j-1}+4\theta)
\end{equation}
which is upper bounded as
$$
A_{i-1}\le
16 c_0^3 \rho^3 \sigma (1+c_0 \rho ) + 4 \rho^2 \le 16 c_0^3 \rho^3 \sigma 3c_0 \rho + 4c_0\rho^3 \le (\frac{16c_0^2 \cdot 3}{10} + 4) c_0 \rho^3\le  9c_0\rho^3
$$
again using \eqref{sigmaBetter}.   
Write $\Sigma = \sum_{j=1}^{k} e^{4i\alpha_{j}} $, and square both sides of (\ref{angRecur}), then sum from $1$ to $k-1$ to get

\begin{multline*}
 \Sigma - e^{4i\alpha_1} = z^4 (\Sigma - e^{4i\alpha_k}) + \sigma  \sum_{j=1}^{k-1} \Big( \sigma\frac{ - \omega_{j+1}^2 \rho^2 (z^2 e^{2i\alpha_{j}} - 1)^4 }{(1 + \sigma \omega_{j+1} i \rho ( 1 - z^2 e^{2i\alpha_{j}}))^2}
 + 2 e^{2i\alpha_k}z^2  \frac{\omega_{k+1}i\rho(z^2 e^{2i\alpha_k}-1)}{1-\sigma\omega_{k+1}i\rho(1-z^2e^{2i\alpha_k})}  \Big).
\end{multline*}
Call the sum on the right $\tilde{\Sigma}$. The terms in $\tilde \Sigma$ are bounded by

\begin{equation*}
\sigma c_0^2\rho^22^4/(4/5)^2 + 4c_0\rho/(4/5)\le 8 c_0 \rho
\end{equation*}
again making use of \eqref{sigmaBetter} multiple times. Rearranging gives
\begin{equation}
 \Sigma = \frac{e^{4i\alpha_1} - z^4 e^{4i\alpha_k} + \sigma \tilde{\Sigma} }{1-z^4}
\end{equation}
and multiplying everything by $-2\rho^2 z^4$ and taking the real part of both sides gives

\begin{equation}
-2\rho^2  \sum_{j=1}^{k} \cos (4\alpha_j+4\theta) = -2\rho^2 \operatorname{Re} z^4 \frac{e^{4i\alpha_1} - z^4 e^{4i\alpha_k}}{1-z^4} -2\rho^2 \operatorname{Re} z^4 \frac{\sigma \tilde{\Sigma}}{1-z^4}.
\end{equation}
Call the first term on the right hand side $G_k$.
We have
$$
|\Delta G_k|\le \frac{4\rho^2}{|1-z^4|}=\frac{2{\rho^2}}{|\sin 2\theta|}=:c_4, \qquad |G_k|\le \frac{2{\rho^2}}{|\sin 2\theta|}$$
Moreover we have

\begin{equation*}\label{xx2}
|A_k-\Delta G_k - 2 \rho^2 | = |2\rho^2\operatorname{Re} z^4 \frac{\sigma \Delta\tilde{\Sigma_k}}{1-z^4} |
\le \frac{8 c_0\rho^3}{|\sin 2\theta|}\sigma  =:c_5\sigma.
\end{equation*}
We now collect constants:
$$
c_1=\frac{12}{5}c_0 \rho , \quad c_2=4\rho^3, \quad c_3 = 10 c_0\rho^4, \quad c_4=\frac{2 \rho^2}{|\sin 2\theta|}, \quad
c_5= \frac{8c_0 \rho^3}{|\sin 2 \theta|}, \qquad \tilde{c} = 2\rho^2, \quad c_6 := \frac{10 c_0^3}{2\sin\theta |\sin 2 \theta|}. 
$$
We now apply Lemma \ref{smgale}. The condition \eqref{pickSig} on $\sigma$ is easily satisfied by \eqref{sigma}.
For the condition \eqref{asdelta}, we use the inequality $1/2 \le \rho \le 1/|\sin 2\theta|$ and $1\le c_0$ to get the bound
\begin{multline*}
c_3 + c_5 + 6c_1^3 + 2(c_2+c_4)^2/c_6+ 8c_1(c_2+c_4))\le \\
\frac{c_0^3\rho^3}{|\sin 2\theta|}
\left(10+8+ 6(\frac{12}{5})^3+\frac2{10}(4+2)^2+2\frac{48}{5}(4+2)
\right).
\end{multline*}


The constant above is less than $224$. The claim follows.
\end{proof}


\begin{lemma}
\label{mgalebktk}
 For a positive supermartingale $X_t$

 \begin{equation*}
  P = P\left(\exists t \; s.t. \; X_t \geq B \mathbb{E}X_0 \right) \leq 1/B.
 \end{equation*}
\end{lemma}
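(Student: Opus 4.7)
The plan is to apply the optional-stopping argument underlying Doob's maximal inequality for nonnegative supermartingales. First I would introduce the hitting time
\begin{equation*}
\tau = \inf\{t \geq 0 : X_t \geq B\,\mathbb{E} X_0\},
\end{equation*}
with the convention $\inf \emptyset = \infty$, so that the event whose probability we wish to bound is exactly $\{\tau < \infty\}$. For each finite time $t$, the truncation $\tau \wedge t$ is a bounded stopping time, which is the property that makes optional stopping available without any further integrability hypothesis beyond $\mathbb{E} X_0 < \infty$.

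Next I would invoke the optional stopping theorem for the supermartingale $X$ at $\tau \wedge t$ to get $\mathbb{E} X_{\tau \wedge t} \leq \mathbb{E} X_0$. Using nonnegativity of $X$ together with the defining property $X_\tau \geq B\,\mathbb{E} X_0$ on $\{\tau \leq t\}$, I would chain
\begin{equation*}
\mathbb{E} X_0 \;\geq\; \mathbb{E} X_{\tau \wedge t} \;\geq\; \mathbb{E}\bigl[X_{\tau \wedge t}\,\mathbf{1}_{\{\tau \leq t\}}\bigr] \;=\; \mathbb{E}\bigl[X_\tau\,\mathbf{1}_{\{\tau \leq t\}}\bigr] \;\geq\; B\,\mathbb{E} X_0 \cdot P(\tau \leq t),
\end{equation*}
which yields $P(\tau \leq t) \leq 1/B$ after dividing through; the case $\mathbb{E} X_0 = 0$ is trivial since then $X \equiv 0$ almost surely, so $\{\exists t : X_t \geq B\,\mathbb{E} X_0 \}$ is empty (assuming $B > 0$).

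Finally I would let $t \to \infty$ and invoke continuity of probability from below: since $\{\tau \leq t\}$ increases to $\{\tau < \infty\}$, the desired bound $P(\tau < \infty) \leq 1/B$ follows. I do not expect a genuine obstacle: optional stopping at a bounded stopping time is valid for any supermartingale without extra hypotheses, nonnegativity kicks the rest of the estimate into shape in the manner of Markov's inequality, and the passage to the limit is routine. The only minor decision is whether to write the argument in discrete or continuous time; since the process $r_k$ to which this lemma will be applied is indexed over the integers, the discrete-time form of optional stopping is sufficient.
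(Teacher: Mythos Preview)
Your proposal is correct and follows essentially the same optional-stopping argument as the paper: introduce the hitting time $\tau$, apply optional stopping at the bounded time $\tau\wedge t$ to get $\mathbb{E}X_0 \ge \mathbb{E}X_{\tau\wedge t} \ge B\,\mathbb{E}X_0\cdot P(\tau\le t)$, and then let $t\to\infty$. Your treatment is slightly more explicit (handling the $\mathbb{E}X_0=0$ case and the monotone limit), but the strategy is identical.
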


\begin{proof}
Let $\tau$ be the first time that $X_t \geq B \mathbb{E}X_0$, and let $p_T = P\left(X_{\left(\tau \wedge T\right)} \geq B \mathbb{E}X_0  \right)$.  Then by optional stopping

 \begin{equation*}
  \mathbb{E}X_0 \geq E \left(X_{\tau \wedge T}\right) \geq E\left(X_{\tau \wedge T}; X_{\tau \wedge T} \geq B\mathbb{E}X_0\right) \geq p_T B \mathbb{E}X_0.
 \end{equation*}
 But $p_T \uparrow P$.

\end{proof}

\subsection{Proof of Theorem \ref{tmlowprob}}


\begin{proof}
We consider the functions $F_k$, $G_k$ in Lemma \ref{supMgale}.
To simplify notation, let $f_{k,\delta} = \left(F_{k-1} - \left(2-\delta/2\right)G_{k-1}\right)$, and note that
$$
|f_{k,\delta}| \le \frac{6\rho^3}{|\sin 2\theta|}=: \bar c/2.
$$
Lemma \ref{cuteTrick} tells us that $r_k^{-1} = Y_k$, and under the conditions of Lemma \ref{supMgale} (which are satisfied by assumption), the process
\begin{equation*}
e^{f_{k,\delta}\sigma^2 \left(1 - \delta\right)} \left(r_k e^{-\kappa k}\right)^{\left(\delta-1\right)} = \left(e^{f_{k,\delta}\sigma^2} Y_k e^{\kappa k}\right)^{\left(1-\delta\right)}
\end{equation*}
is a positive supermartingale. Now choose 

\begin{equation*}
\delta =  \frac{\kappa}{\sigma^2 \rho^2} + 224 \frac{c_0^3\rho\sigma}{|\sin 2\theta|}.
\end{equation*}
Then by our bound on $\kappa$ we have that

\begin{equation*}
 \delta \leq \frac{230c_0^3 \rho \sigma}{|\sin2\theta|}
\end{equation*}
and by our bound on $\sigma$ we have that 

\begin{equation*}
 \delta \leq 1/2
\end{equation*}
so that the conditions of Lemma \ref{mgalebktk} are satisfied. Then Lemma \ref{mgalebktk} gives

\begin{equation*}
P\left(\exists n: \left(e^{f_{n,\delta}\sigma^2 }Y_{n}e^{\kappa n}\right)^{1-\delta} \geq \left(e^{f_{1,\delta}\sigma^2}Y_{1}e^{\kappa}
e^{B - \bar{c}\sigma^2}\right)^{1-\delta} \right) \leq e^{-(B-\bar{c}\sigma^2)\left(1-\delta\right)}.
\end{equation*}
Taking logs, the event above is equivalent to
\begin{equation*}
\{\exists n: \log Y_{n} - \log Y_1 +\kappa \left(n-1\right)\geq B-\bar{c}\sigma^2 + \left(f_{1,\delta} - f_{n,\delta}\right)\sigma^2\}
\end{equation*}
which is a subevent of

\begin{equation*}
 \{\exists n:\log Y_{n} - \log Y_1 +\kappa \left(n-1\right)\geq B\}.
\end{equation*}
So the probability that the process $\log Y_n + \kappa n$ has a backtrack of size $B$ starting from time $1$ is at most $e^{-(B-\bar{c}\sigma^2)(1-\delta)}$. But

\begin{equation*}
 e^{-(B-\bar c \sigma^2)(1-\delta)} \leq e^{-B(1-\delta)}e^{\bar c \sigma^2}
\end{equation*}
and the bound on $\sigma$ gives

\begin{equation*}
 \bar c\sigma^2 \leq \frac{12\rho^3\sigma^2}{|\sin 2\theta|} \leq \frac{12}{460^2}
\end{equation*}
so that

\begin{equation*}
 e^{\bar c \sigma^2} \leq 2.
\end{equation*}
Now by $\kappa \leq 6c_0^3 \rho^3 \sigma^3/ |\sin 2 \theta|$ and our choice of $\delta$, we have

\begin{align*}
e^{-B(1-\delta)} \leq e^{-B\left(1- \frac{230c_0^3}{2\sin\theta|\sin2\theta|}\sigma \right)}
\end{align*}
meaning the probability that the process $\log Y_n + \kappa n$ has a backtrack of size $B$ starting from time $1$ is at most
$2e^{-B(1-230c_0^3\sigma /2\sin\theta|\sin 2\theta|)}$.

\end{proof}

%
%
%
%
%
%
%
%
%
%
%
%
%
%
%
%
%

\section{Proof of Theorem \ref{mainTheorem}}
\label{sect5}
\begin{proof}

Assume that $\mathbb{P}$ has support bounded by $c_0$. 
Recall that $N_n$ is the number of eigenvalues of the operator $H_{\omega,n}$ in the interval $[\lambda_0, \lambda_0+\lambda]$.
Let $\lambda_0 \in \left(-2,0\right) \cup \left(0,2\right)$, $n \in \mathbb{N}$, $\lambda>0$ and let $\sigma \leq \frac{2\sin\theta |\sin 2\theta|}{460c_0^3}$, so it satisfies the conditions of Theorem \ref{tmlowprob}.

Further, let $M = \frac{\sqrt{5}\sigma c_0^2}{2\sin^2\theta} \leq 1/2$, $\epsilon = 1$ and $\beta = \sigma^3$. We may assume that $\lambda \leq \sigma^3$, because otherwise the bound is trivial.

Choose $\kappa = \epsilon\left(\lambda+\beta\right)/\sin\theta + 2M\beta$. Then

\begin{equation*}
 \kappa \leq (\sigma^3+\sigma^3)/\sin\theta + \sigma^3 \leq 3 \sigma^3 / \sin \theta \leq 6 c_0^3 \rho^3 \sigma^3/|\sin 2\theta|.
\end{equation*}
By our choices above, and by Theorem \ref{MBound}, the conditions of Theorem \ref{tmbktkblp} are satisfied. So by Theorem \ref{tmbktkblp} we have that 
\begin{multline*}
  N_n \leq 1 + \text{the number of backtracks of size at least } \log \left(\epsilon\beta/\lambda\right) \text{ of } \log Y_n + \kappa n \\
  \leq 1 + \sum_{k=1}^n \mathbb{1}\left(\log Y_n + \kappa n \text{ has a backtrack of size } \log\left(\epsilon\beta/\lambda\right) \text{ starting at } k\right).
 \end{multline*}
Taking expectations and dividing both sides by $n$ yields
 
\begin{equation*}
  \frac{1}{n} E N_n \leq \frac{1}{n}\left(1 + nP\left(\log Y_n + \kappa n \text{ has a backtrack of size } \log\left(\epsilon\beta/\lambda\right)\right)\right).
\end{equation*}  
Now set $B = \log\left( \epsilon\beta/\lambda \right)$. Applying Theorem \ref{tmlowprob} gives 
%

 \begin{align*}
 \label{IDSpreLim}
  \frac{1}{n} E N_n  &\leq \frac{1}{n} + 2e^{-B (1-230c_0^3\sigma/2\sin \theta |\sin 2 \theta|) }\\
  &= \frac{1}{n} + 2\left( \frac{\lambda}{\epsilon \beta}  \right)^{1-230c_0^3\sigma/2\sin\theta|\sin 2 \theta|}\\
  &\leq \frac1n +  \frac{2}{\sigma^3} \lambda^{1-230c_0^3\sigma/2\sin\theta|\sin 2 \theta|}.
 \end{align*} 
Taking the limit as $n \to \infty$ yields
  
   \begin{equation*}
    \mu\left(\lambda_0, \lambda_0+\lambda\right) \leq \frac{2}{\sigma^3} \lambda^{1-230c_0^3\sigma/2\sin\theta|\sin 2\theta|}.
   \end{equation*}
  Now we use that
  
  \begin{align*}
   |2\sin\theta \sin2\theta| &= |2(\cos \theta) 2 \sin^2\theta | \\
   &= |\lambda_0|(4-\lambda_0^2)/2 \\
   &= \frac12 \left|\lambda_0\right|\left|2-\left|\lambda_0\right| \right|\left|2+|\lambda_0|\right| \\
   &\geq \left|\lambda_0\right|\left|2-\left|\lambda_0\right|\right|
  \end{align*}
so we have

  \begin{equation*}
    \mu\left(\lambda_0, \lambda_0+\lambda\right) \leq \frac{2}{\sigma^3} \lambda^{1-230c_0^3\sigma/\left|\lambda_0\right|\left|2-\left|\lambda_0\right|\right|}.
  \end{equation*}
  And note that
  
  \begin{equation*}
   |\lambda_0||2-|\lambda_0|| \geq \min(|\lambda_0|,2-|\lambda_0|)
  \end{equation*}
  so for $\lambda_0$ in $(-2+\gamma,-\gamma)\cup(\gamma,2-\gamma)$, 
  
  \begin{equation*}
   |\lambda_0||2-|\lambda_0|| \geq \gamma
  \end{equation*}
  giving
  
  \begin{equation*}
    \mu\left(\lambda_0, \lambda_0+\lambda\right) \leq \frac{2}{\sigma^3} \lambda^{1-230c_0^3\sigma/\gamma}.
  \end{equation*}
  
  Now the condition on $\sigma$ gives
  
  \begin{align*}
   460c_0^3 \sigma &\leq 2\sin\theta |\sin 2\theta| \\
    &= 2\sin^\theta|2\cos\theta| \\
    &= |\lambda_0| 2\sin^2\theta \\
    &= |\lambda_0| \frac{4-\lambda_0^2}{2}
  \end{align*}
so it is equivalent to

\begin{equation}
\label{LambdaCondition}
 \lambda_0(4-\lambda_0^2) \geq 920 c_0^3 \sigma.
\end{equation}
If this condition is violated, we have that

\begin{equation*}
 \frac{920c_0^3\sigma}{\gamma} \geq \frac{920 c_0^3 \sigma}{|\lambda_0||2-|\lambda_0||} \geq  |2+|\lambda_0|| \geq 2
\end{equation*}
meaning

\begin{equation*}
1 - 460c_0^3\sigma/\gamma \leq 0.
\end{equation*}
This means that by allowing an extra factor of $2$ in the constant of the exponent of $\lambda$, the bound on the IDS is trivially satisfied for $\lambda_0$ violating \eqref{LambdaCondition}. In other words, if we loosen our bound on the IDS from

\begin{equation*}
 \mu\left(\lambda_0, \lambda_0+\lambda\right) \leq \frac{2}{\sigma^3} \lambda^{1-230c_0^3\sigma/\left|\lambda_0\right|\left|2-\left|\lambda_0\right|\right|}
\end{equation*}
to

\begin{equation*}
 \mu\left(\lambda_0, \lambda_0+\lambda\right) \leq \frac{2}{\sigma^3} \lambda^{1-460c_0^3\sigma/\left|\lambda_0\right|\left|2-\left|\lambda_0\right|\right|}
\end{equation*}
we may drop the condition on $\lambda_0$. This completes the proof.
 \end{proof}

\bigskip
\noindent\textbf{Acknowledgements}
\phantomsection\addcontentsline{toc}{section}{Acknowledgements}\quad We thank Andrew Stewart and Jeffrey Schenker for careful reading and useful comments on earlier versions. This research was supported by the Canada Research Chair program and the NSERC Discovery Accelerator Grant of the second author. 

\setstretch{1}
\phantomsection\addcontentsline{toc}{section}{References}
\bibliographystyle{dcu}
\bibliography{References}

\bigskip\bigskip\bigskip\noindent
\begin{minipage}{0.49\linewidth}
Eric Hart
\\Department of Mathematics
\\University of Toronto
\\Toronto ON~~M5S 2E4, Canada
\\{\tt eric@math.toronto.edu}
\end{minipage}
\begin{minipage}{0.49\linewidth}
B\'alint Vir\'ag
\\Departments of Mathematics and Statistics
\\University of Toronto
\\Toronto ON~~M5S 2E4, Canada
\\{\tt balint@math.toronto.edu}
\end{minipage}

\end{document}